\newtheorem{theorem}{Theorem}
\newtheorem{lemma}[theorem]{Lemma}
\theoremstyle{definition}
\newtheorem{definition}[theorem]{Definition}
\newtheorem{remark}[theorem]{Remark}
\numberwithin {equation}{section}
\def\cs{{$C^{\ast}$}}
\def\c{{\mathbb{C}}}
\def\gh{{(G,H)}}
\def\ghr{{(G_r,H_r)}}
\def\ghpr{{(G',H')}}
\def\gb{{\overline{G}}}
\def\hb{{\overline{H}}}
\def\ghb{{(\gb,\hb)}}
\def\ucb{{\mathcal{UCB}}}
\def\per{{\text{Per}}}
\def\th{{\theta}}
\def\ff{{\varphi}}
\def\D{{\Delta}}
\def\ba{{\backslash}}
\def\inv{{^{-1}}}
\def\hg{{H\backslash G}}
\begin{document}
\title{Amenability of Hecke pairs and a question of Eymard}

\author{Vahid Shirbisheh}
\email{shirbisheh@gmail.com}


\keywords{Hecke pairs,  the Schlichting completion, amenability.}

\begin{abstract}
In this short paper we show that if $\gh$ is an amenable Hecke pair and $\Gamma$ is a subgroup of $G$ containing $H$, then the Hecke pair $(\Gamma, H)$ is amenable too. This answers positively a question of Pierre Eymard when it is restricted to Hecke pairs.
\end{abstract}
\maketitle

Let $H$ be a subgroup of a discrete group $G$. The pair $\gh$ is called a Hecke pair if every double coset of $H$ in $G$ is a union of finitely many left cosets of $H$. Some authors refer to this situation by saying that ``$H$ is a Hecke subgroup of $G$'', see \cite{klq}, ``$H$ is an almost normal subgroup of $G$'', see \cite{bc}, or ``$H$ is a commensurated subgroup of $G$'', see \cite{shalomwillis}. We consider a slightly more general setting by allowing $G$ to be a locally compact group and assuming that $H$ is an open subgroup of $G$ which is also a Hecke subgroup of $G$. In this case the homogeneous space $\hg$ is discrete, and so we call the pair $\gh$ a ``discrete'' Hecke pair, as opposed to ``non-discrete'' Hecke pairs which are defined and studied in \cite{s3}. To be consistent with our other works related to Hecke pairs in \cite{s1, s2, s3}, we consider the set $\hg$ of right cosets as the homogeneous space associated with the pair $\gh$, except when $H$ is normal in $G$.

A totally disconnected locally compact group $\gb$ and a compact open subgroup $\hb$ of $\gb$ are associated to every suitable (reduced) discrete Hecke pair $\gh$ such that $G$ is isomorphic with a dense subgroup of $\gb$ and $\hb$ is the closure of the image of $H$ in $\gb$. Then the pair $\ghb$ is called the Schlichting completion of $\gh$. The Schlichting completion of discrete Hecke pairs is a powerful apparatus to reduce the study of an arbitrary discrete Hecke pair $\gh$ to the study of a discrete Hecke pair $\ghb$ in which the subgroup $\hb$ is a compact open subgroup of a totally disconnected locally compact group $\gb$. Many applications of this technique in amenability, Hecke \cs-algebras and property (RD) for Hecke pairs were studied in \cite{tzanev, klq, anan, s3}. In this paper we use the Schlichting completion to study a specific question concerning amenability of pairs in the special case where all pairs are discrete Hecke pairs.

Assume $H$ is a closed subgroup of a locally compact group $G$. Amenability of homogeneous spaces such as $\hg$ was studied thoroughly in \cite{eymard} by Pierre Eymard. Regarding the purpose of this paper, we use the phrase ``amenability of the pair $\gh$'' in lieu of the phrase ``amenability of the homogeneous space $\hg$''. Some authors also refer to this notion by saying that ``$H$ is co-amenable in $G$'', see for instance \cite{monod-popa}. This amenability is a generalization of the amenability of the quotient group $G/H$ when $H$ is normal in $G$. So by reducing the normality condition of $H$ to the condition that $H$ is a Hecke subgroup of $G$, we expect many statements still hold true. Assume $\Gamma$ is a closed subgroup of $G$  containing $H$. If $H$ is a normal subgroup of $G$ and the pair $\gh$ (or equivalently, the quotient group $G/H$) is amenable, then the pair $(\Gamma, H)$ is amenable as well. Motivated by this observation, in page 55 of \cite{eymard}, P. Eymard asked ``whether is this still true when $H$ is no longer normal in $G$?'' The answer is negative in general, see \cite{bekka, monod-popa, pestov} for the history of this question, the counterexamples for the general case and related topics. However, as our main result, when $\gh$ is a discrete Hecke pair, we give a positive answer to Eymard's question in Theorem \ref{thm:main}. Shortly after submitting the first version of this paper to arXiv, we noticed that the same statement as Theorem \ref{thm:main}, with a different proof, had been appeared in Corollary 3.10 of \cite{anan} before.

This paper is organized as follows: We begin with a brief discussion of the Schlichting completions of reduced discrete Hecke pairs. Afterwards, basic definitions and results concerning amenability of (arbitrary or Hecke) pairs $\gh$ are given. The role of the Schlichting completion in the study of amenable discrete Hecke pairs is briefly recalled. Finally, using the Schlichting completion, we prove our main result.

The Schlichting completion of a Hecke pair was studied by K. Tzanev in \cite{tzanev}, based on the works of G. Schlichting in \cite{sch1, sch2}. However, we follow S. Kaliszeweski, M.B. Landstad and J. Quigg's treatment of the subject in \cite{klq} for definitions, notations and basic properties. Given a discrete Hecke pair $\gh$, we set $K_\gh:=\cap_{x\in G}xHx\inv$, $G_r:=\frac{G}{K_\gh}$ and $H_r:=\frac{H}{K_\gh}$. The Hecke pair $\gh$ is called reduced if $K_\gh$ is the trivial subgroup of $G$, otherwise the Hecke pair $\ghr$ is the reduced Hecke pair associated with $\gh$. The Hecke discrete Hecke pair $\ghr$ enjoys many features of $\gh$, see for instance Remark \ref{rem:amen-pairs}(ii), so one can often replace an arbitrary discrete Hecke pair with its reduced version. Given a reduced discrete Hecke pair $\gh$, the group $G$, as a discrete group, can be embedded by a homomorphism inside the group $\per(G/H)$ of all permutations on the set of left cosets $G/H$. Then the completion of $G$ and $H$ in the permutation topology of $\per(G/H)$ are denoted by $\gb$ and $\hb$, respectively. It is shown that $\gb$ is a totally disconnected locally compact group and $\hb$ is a compact open subgroup of $\gb$.

The topology of $\gb$ can be considered as the group topology generated by the set of all conjugates of $\hb$, as a subbasis of open neighborhoods at the identity. This topology on $\gb$ (or its restriction on $G$) is called the Hecke topology. A Hecke pair $\gh$ is called a Schlichting pair if $H$ is a compact and open subgroup of $G$, when $G$ is equipped with the Hecke topology. One notes that when $H$ is a compact open subgroup of a totally disconnected locally compact group $G$, the Hecke pair $\gh$ is a Schlichting pair if and only if it is reduced. The following theorem and the lemma after that are taken from \cite{klq}:

\begin{theorem}
\label{thm:klq4-8} (\cite{klq}, Theorem 4.8)
Let $\gh$ be a reduced Hecke pair with the Schlichting completion $\ghb$. Assume $(L,K)$ is a Schlichting pair. For every homomorphism $\ff:G\to L$ such that $\ff(G)$ is dense in $L$ and $\ff(H)\subseteq K$, there exists a unique continuous homomorphism $\overline{\ff}:\gb \to L$ which extends $\ff$, in other words, $\overline{\ff} \th=\ff$.

Furthermore, if $H=\ff\inv(K)$, then $\overline{\ff}$ is a topological group isomorphism from $\gb$ onto $L$ and maps $\hb$ onto $K$.
\end{theorem}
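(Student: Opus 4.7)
My plan is to treat the statement as a universal property of the Schlichting completion. Uniqueness of $\overline{\ff}$ is immediate: $\th(G)$ is dense in $\gb$ and $L$ is Hausdorff, so any two continuous extensions of $\ff$ must agree, and the same density forces an extension to be a group homomorphism. What remains is existence, and then the sharper assertion under the additional hypothesis $H=\ff\inv(K)$.

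For existence, the key step is to show that $\ff\colon G\to L$ is continuous when $G$ carries the Hecke topology it inherits from $\gb$. A subbasis of neighborhoods of $e$ in $L$ is formed by the conjugates $\ell K\ell\inv$ with $\ell\in L$, since $(L,K)$ is a Schlichting pair. Given such an $\ell$, density of $\ff(G)$ in $L$ produces $x\in G$ with $\ff(x)\in \ell K$; then $\ff(x)K\ff(x)\inv=\ell K\ell\inv$, and combined with $\ff(H)\subseteq K$ this yields $\ff(xHx\inv)\subseteq \ell K\ell\inv$. The set $xHx\inv$ is a subbasic Hecke-open neighborhood of $e$ in $G$, so $\ff$ is continuous at $e$ and hence continuous. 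Group homomorphisms between topological groups that are continuous at $e$ are uniformly continuous for the canonical two-sided uniformities, and $L$ is complete as a locally compact group, so $\ff$ extends uniquely to a continuous map $\overline{\ff}\colon\gb\to L$, which remains a homomorphism by density.

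Now assume $H=\ff\inv(K)$. I would first establish $\overline{\ff}\inv(K)=\hb$. The inclusion $\hb\subseteq\overline{\ff}\inv(K)$ follows from closedness of $K$ in $L$ and continuity of $\overline{\ff}$. Conversely, pick $g\in\overline{\ff}\inv(K)$ and a net $g_\alpha\in G$ with $g_\alpha\to g$; then $\ff(g_\alpha)\to\overline{\ff}(g)\in K$, and openness of $K$ in $L$ forces $\ff(g_\alpha)\in K$ eventually, so $g_\alpha\in\ff\inv(K)=H$ eventually, whence $g\in\hb$. In particular $\ker\overline{\ff}\subseteq\hb$; normality of the kernel then gives $\ker\overline{\ff}\subseteq g\hb g\inv$ for every $g\in\gb$, and reducedness of the Schlichting pair $\ghb$ forces $\ker\overline{\ff}=\{e\}$. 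The same approximation argument shows $\ff(H)$ is dense in $K$, so compactness of $\hb$ gives $\overline{\ff}(\hb)=K$. Then $\overline{\ff}(\gb)$ is a subgroup of $L$ containing the open set $K$ and the dense subgroup $\ff(G)$; being open it is closed, so it must equal $L$. Finally, $\overline{\ff}|_{\hb}\colon\hb\to K$ is a continuous bijection from a compact group onto a Hausdorff group, hence a homeomorphism; this makes $\overline{\ff}$ a local homeomorphism at $e$, and therefore a topological isomorphism mapping $\hb$ onto $K$.

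The delicate point throughout is the continuity of $\ff$ with respect to the Hecke topology: it is there that the hypotheses $\ff(G)$ dense in $L$ and $\ff(H)\subseteq K$ genuinely interact, by matching the subbasis of conjugates of $H$ on the $G$-side with the subbasis of conjugates of $K$ on the $L$-side. Once continuity is in hand, the extension to $\gb$ is standard completion bookkeeping, and the isomorphism part reduces to manipulation of open subgroups combined with the reducedness of $\ghb$.
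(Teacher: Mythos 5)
The paper does not prove this statement; it is quoted verbatim from \cite{klq}, Theorem 4.8, so there is no in-paper argument to compare yours against. Your proof is correct and follows what is essentially the standard (and, as far as I can tell, the cited source's) route: continuity of $\ff$ for the Hecke topologies by matching the subbasis of conjugates of $H$ with the subbasis of conjugates of $K$, extension to $\gb$ by completeness of the locally compact group $L$, and, under the extra hypothesis $H=\ff\inv(K)$, the identity $\overline{\ff}\inv(K)=\hb$ combined with reducedness of $\ghb$ to force $\ker\overline{\ff}=\{e\}$, after which surjectivity and openness follow from $\overline{\ff}(\hb)=K$. The only point you use silently is that the subbasic neighborhoods of $e$ in $G$ for the induced topology are exactly the sets $xHx\inv$ with $x\in G$; this rests on $\hb\cap G=H$ and on the fact that every conjugate $g\hb g\inv$ with $g\in\gb$ already equals $x\hb x\inv$ for some $x\in G$ (since $\hb$ is open and $G$ is dense). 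Both are standard facts about the Schlichting completion, but they deserve a line if the proof is to be self-contained.
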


\begin{lemma}
\label{lem:bijective-Schlich} (\cite{klq}, Proposition 4.9)
Let $\gh$ be a reduced discrete Hecke pair and let $\ghb$ be its Schlichting completion. Then the following statements hold:
\begin{itemize}
\item[(i)] The mapping $\alpha: \hg\to \hb\ba\gb$ (resp. $\alpha':G/H\to \gb / \hb$), defined by $Hg\mapsto \hb g$ (resp. $gH\mapsto g\hb$) for all $g\in G$ is a $G$-equivariant bijection. 
\item[(ii)] The mapping $\beta: G//H \to \gb// \hb$, defined by $HgH\mapsto \hb g\hb$ for all $g\in G$ is a bijection.
\item[(iii)] The mapping $x H x\inv \mapsto x \overline{H}x\inv$ from $\{x H x\inv ; x\in G\}$ into $\{x \overline{H} x\inv ; x\in \overline{G}\}$ is a bijection.
\item[(iv)] Let $\Gamma$ be a closed subgroup of $G$ containing $H$ and let $\overline{\Gamma}^G$ be its closure in $\gb$. Then the same conclusions as above hold for mappings $\alpha':H\ba \Gamma \to \hb\ba \overline{\Gamma}^G$ and $\beta':\Gamma// H\to \overline{\Gamma}^G //\hb$ which are defined by restricting $\alpha$ and $\beta$, respectively.
\end{itemize}
\end{lemma}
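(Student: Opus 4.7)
The plan is to exploit three defining properties of the Schlichting completion $\ghb$: (a) $G$ is dense in $\gb$, (b) $\hb$ is open in $\gb$ with $\hb\cap G=H$, and (c) $H$ is dense in $\hb$. Each bijection will follow from a routine well-defined/injective/surjective verification using these three ingredients, with one genuine obstacle in part (ii).

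For (i), the map $\alpha:Hg\mapsto\hb g$ is well-defined since $H\subseteq\hb$, $G$-equivariant by inspection, and injective because $\hb g=\hb g'$ forces $g'g\inv\in\hb\cap G=H$. For surjectivity, given $\bar g\in\gb$, the set $\hb\bar g$ is open in $\gb$, so density of $G$ yields $g\in G\cap\hb\bar g$, whence $\hb g=\hb\bar g$. The statement for $\alpha'$ is identical with left cosets.

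Part (ii) is where I expect the main obstacle: injectivity of $\beta$ reduces to the claim $\hb g\hb\cap G=HgH$. Well-definedness is immediate from $H\subseteq\hb$, and surjectivity is inherited from (i): given $\hb\bar g\hb$, pick $g\in G$ with $\hb g=\hb\bar g$ so that $\hb g\hb=\hb\bar g\hb$. For the nontrivial inclusion $\hb g\hb\cap G\subseteq HgH$, take $y=\bar h_1 g\bar h_2\in G$ with $\bar h_i\in\hb$. The set $g\inv\hb g\cdot\bar h_2$ is an open neighborhood of $\bar h_2$ in $\gb$, so density of $H$ in $\hb$ provides $h_2\in H$ inside it; then $\hb gh_2=\hb g\bar h_2=\hb y$, and applying (i) to $y,gh_2\in G$ yields $y\in Hgh_2\subseteq HgH$. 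Injectivity of $\beta$ now follows by intersecting the assumed equality $\hb g\hb=\hb g'\hb$ with $G$.

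For (iii), well-definedness uses that conjugation by $x\in G$ is a homeomorphism of $\gb$, so $x\hb x\inv=\overline{xHx\inv}$; injectivity then follows from $x\hb x\inv\cap G=x(\hb\cap G)x\inv=xHx\inv$. For surjectivity, given $\bar x\in\gb$, openness of $\hb$ and density of $G$ produce $x\in G$ with $\bar x\inv x\in\hb$, and since $\bar x\inv x$ lies in the subgroup $\hb$ it normalizes $\hb$ trivially, giving $x\hb x\inv=\bar x\hb\bar x\inv$. Part (iv) is then a direct restriction: $\Gamma$ is dense in $\overline{\Gamma}^G$ by construction, $\hb$ remains open in $\overline{\Gamma}^G$, and the inclusion $H\subseteq\Gamma$ yields $\hb\cap\overline{\Gamma}^G=H$, so the arguments of (i)--(iii) apply verbatim to the pair $(\overline{\Gamma}^G,\hb)$.
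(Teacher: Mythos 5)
The paper does not prove this lemma at all: it is quoted verbatim from \cite{klq} (Proposition 4.9), so there is no internal argument to compare yours against. Judged on its own, your proof is essentially correct and self-contained, resting on the three standard facts about the Schlichting completion that you isolate ($G$ dense in $\gb$, $\hb$ open with $\hb\cap G=H$, and $\hb=\overline{H}$); you correctly identify the one non-routine step, namely the inclusion $\hb g\hb\cap G\subseteq HgH$ in part (ii), and your density argument for it (perturbing $\bar h_2$ inside the open set $g\inv\hb g\,\bar h_2$ to an element of $H$ and then invoking the injectivity from (i)) is sound. One slip to fix: in part (iv) you write $\hb\cap\overline{\Gamma}^G=H$, which is false as stated, since $\hb\subseteq\overline{\Gamma}^G$ forces that intersection to be all of $\hb$. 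What you actually need, and what makes the arguments of (i)--(iii) go through for the pair $(\overline{\Gamma}^G,\hb)$, is $\hb\cap\Gamma=(\hb\cap G)\cap\Gamma=H$, together with the density of $\Gamma$ in $\overline{\Gamma}^G$ and the openness of $\hb$ in $\overline{\Gamma}^G$, all of which you already have. With that correction the proof is complete.
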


\begin{remark}
\label{rem:diff-top-hp}
An important feature of the Schlichting completion associated to a discrete Hecke pair $\gh$ is that the whole process is algebraic and the original topology of the group $G$ plays no role in the final pair. In other words, when $\gh$ is a discrete Hecke pair with respect to two different locally compact topology on $G$ (and $H$),  we obtain the same reduced Hecke pair $\ghr$, (algebraically), and the same Schlichting completion, (algebraically and topologically).
\end{remark}

We proceed with elements of amenability of pairs of locally compact groups.

\begin{definition}
\label{def:amen-pair}
Let $H$ be a closed subgroup of a locally compact group $G$. The pair $\gh$ is called amenable if it possesses the fixed point property, (shortly denoted by ``(FP)''), that is, if $G$ acts continuously on a compact convex subset $Q$ of a locally convex topological vector space by affine transformations and the restriction of this action to $H$ has a fixed point, then there exists a fixed point for the action of $G$ as well.
\end{definition}
In the following remark we address an equivalent definition of amenability of pairs $\gh$ which is important for our purpose. The interested reader can find more equivalent definitions in \cite{eymard}, in Theorem 2.3 of \cite{bekka}, and in Proposition 3 of \cite{monod-popa}. When $\gh$ is a Hecke pair, several other equivalent definitions for amenability of $\gh$ is given in Proposition 5.1 of \cite{tzanev}. For every complex function $f$ on $\hg$ and every $g\in G$, we define $_gf(Hx):=f(Hxg\inv)$ for all $Hx\in \hg$.
\begin{remark}
\label{rem:equi-def-amen}
Let $H$ be a closed subgroup of a locally compact group $G$. Let $\ucb(\hg)$ denote the vector space of bounded functions on $\hg$ which are uniformly continuous with respect to the right action of $G$, that is for every $f\in \ucb(\hg)$, the mapping $g\mapsto\, _gf$ is a continuous map from $G$ into the Banach space of all complex bounded functions on $\hg$ with the uniform convergence topology. A mean on $\ucb(\hg)$ is a linear functional $m:\ucb(\hg)\to \c$ such that $m(1)=1$, $m(\overline{f})=\overline{m(f)}$ for all $f\in \ucb(\hg)$, and $m(f)\geq 0$ whenever $0\leq f\in \ucb(\hg)$. A mean $m$ on $\ucb(\hg)$ is called $G$-invariant if $m(\, _gf)=m(f)$ for all $f\in \ucb(\hg)$ and $g\in G$. The existence of a $G$-invariant mean on $\ucb(\hg)$ is equivalent to the amenability of the pair $\gh$, see Page 12 of \cite{eymard} for more details.
\end{remark}
In addition to the examples and elementary properties of amenable pairs listed in Section 3 of Expos\'{e} 1 of \cite{eymard}, the following observations are useful for studying amenable Hecke pairs:
\begin{remark}
\label{rem:amen-pairs}
 Let $H$ be a closed subgroup of a locally compact group $G$.
\begin{itemize}
\item [(i)] Let $N\subseteq H$ be a normal subgroup of $G$. Set $G':=G/N$ and $H':=H/N$. Then the pair $\gh$ is amenable if and only if the pair $\ghpr$ is amenable. By the fixed point property, the amenability of the pair $\gh$ immediately implies the amenability of the pair $\ghpr$. Conversely, assume that the pair $\ghpr$ is amenable and $m'$ is a $G'$-invariant mean on $\ucb (H'\ba G')$. Define $\ff: \ucb(\hg) \to \ucb(H'\ba G')$ by
\[
\ff(f)(H' \bar{x}):=f(Hx),\qquad \forall f\in \ucb(\hg),\, H' \bar{x} \in H'\ba G',
\]
    where $\bar x$ is the image of $x$ under the quotient map $G\to G/N$ for all $x\in G$. Then one checks that $\ff$ is a linear isomorphism. Using $\ff$, we define a mean $m$ on $\ucb(\hg)$ by $m(f):=m'(\ff(f))$ for all $f\in \ucb(\hg)$. Regarding Remark \ref{rem:equi-def-amen}, we only need to prove that $m$ is $G$-invariant. For every $g\in G$ and $f\in \ucb(\hg)$, we compute
\[
\ff(\, _gf)(H'\bar{y}) = f(Hyg\inv)=\ff(f)(H'\bar y \bar g \inv)=\, _{\bar g}(\ff(f))(H' \bar y),
\]
    for all $H'\bar{y}\in H' \ba G'$. Hence we have
\[
m(\, _gf)= m'(\ff(\, _gf))= m'(\, _{\bar g}(\ff(f)))=m' (\ff(f))=m(f).
\]
\item [(ii)] It follows immediately from (i) that a discrete Hecke pair $\gh$ is amenable if and only if the reduced Hecke pair $\ghr$ associated to $\gh$ is amenable.
\item [(iii)] It was proved in Proposition 5.1 of \cite{tzanev} that a reduced discrete Hecke pair $\gh$ is amenable if and only if its Schlichting completion $\ghb$ is amenable. Since $\hb$ is compact, the amenability of these Hecke pairs is equivalent to the amenability of the totally disconnected locally compact group $\gb$.
\item [(iv)] Given a discrete Hecke pair $\gh$, let $\ghr$ denote the reduced Hecke pair associated to $\gh$ and let $(\overline{G_r}, \overline{H_r})$ denote the Schlichting completion of the latter Hecke pair. Using the above discussions, we conclude that the Hecke pair $\gh$ is amenable if and only if $\overline{G_r}$ is amenable.
\item [(v)] Using the above item and Remark \ref{rem:diff-top-hp}, we conclude that if $\tau_1$ and $\tau_2$ are two locally compact topologies on a group $\Gamma$ and $\Gamma_0$ is a Hecke subgroup of $\Gamma$ such that $\Gamma_0$ is open in $\Gamma$ with respect to both topologies $\tau_1$ and $\tau_2$, then the discrete Hecke pair $(\Gamma, \Gamma_0)$ is amenable with respect to the topology $\tau_1$ if and only if it is amenable with respect to the topology $\tau_2$.
\item [(vi)] Let $H_0$ be a closed subgroup of a locally compact group $G_0$ and let $\iota:G_0\to G$ be a continuous, dense, and one-to-one homomorphism such that $H=\overline{\iota(H_0)}$. Then using (FP), one easily observes that the amenability of the pair $(G_0,H_0)$ implies the amenability of the pair $\gh$.
\item [(vii)] Let the pair $(G_0,H_0)$ and the mapping $\iota$ be as the above item. If $\iota$ is a homeomorphism from $G_0$ onto its image under $\iota$, then using (FP), it is easily seen that the amenability of the pair $\gh$ implies the amenability of the pair $(G_0,H_0)$.
\end{itemize}
\end{remark}

\begin{theorem}
\label{thm:main} Let $\gh$ be a discrete Hecke pair and let $\Gamma$ be a closed subgroup of $G$ containing $H$. If the pair $\gh$ is amenable, then the pair $(\Gamma,H)$ is amenable too.
\end{theorem}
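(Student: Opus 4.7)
The plan is to transfer the problem from Hecke pairs to locally compact groups via the Schlichting completion, and then exploit the classical fact that amenability of locally compact groups is inherited by closed subgroups and by quotients.

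As a first reduction, applying Remark \ref{rem:amen-pairs}(i) with $N := K_\gh$ (which is normal in $G$, hence in $\Gamma$, and contained in $H$) allows us to assume that $\gh$ is already a reduced discrete Hecke pair. By Remark \ref{rem:amen-pairs}(iii) the Schlichting completion $\ghb$ then has $\gb$ amenable. Let $\overline{\Gamma}^G$ denote the closure of $\Gamma$ in $\gb$; as a closed subgroup of an amenable locally compact group it is itself amenable. Set $K := \bigcap_{y \in \overline{\Gamma}^G} y \hb y\inv$, which is closed, normal in $\overline{\Gamma}^G$, and contained in $\hb$; the pair $(L, K_L) := (\overline{\Gamma}^G/K,\, \hb/K)$ is then a reduced Schlichting pair, and $L$ is amenable as a quotient of an amenable group.

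The central technical step is to identify $L$ with the Schlichting completion of the reduced Hecke pair $(\Gamma_r, H_r)$ associated to $(\Gamma, H)$. Consider the composite $\Gamma \hookrightarrow \overline{\Gamma}^G \twoheadrightarrow L$; provided one can show the kernel identity $K \cap \Gamma = K_{(\Gamma,H)}$, this descends to an injective homomorphism $\iota : \Gamma_r \to L$. The inclusion $K_{(\Gamma,H)} \subseteq K$ is obtained by approximating any $y \in \overline{\Gamma}^G$ by a net $(x_i)$ in $\Gamma$ and using continuity of conjugation together with closedness of $\hb$ to pass from $x_i\inv g x_i \in H \subseteq \hb$ to $y\inv g y \in \hb$; the reverse inclusion $K \cap \Gamma \subseteq K_{(\Gamma,H)}$ rests on the identification $\Gamma \cap \hb = H$ extracted from Lemma \ref{lem:bijective-Schlich}(iv). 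Routine checks then verify that $\iota$ has dense image, $\iota(H_r) \subseteq K_L$, and $\iota\inv(K_L) = H_r$, so Theorem \ref{thm:klq4-8} yields a topological group isomorphism between the Schlichting completion of $(\Gamma_r, H_r)$ and $L$.

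Amenability of $L$ therefore forces amenability of the Schlichting completion of $(\Gamma_r, H_r)$, and Remark \ref{rem:amen-pairs}(iv) concludes that $(\Gamma, H)$ is amenable. The main obstacle I anticipate is the kernel identity $K \cap \Gamma = K_{(\Gamma,H)}$, since this is precisely what places the reduced pair $(\Gamma_r, H_r)$ inside $(L, K_L)$ in the form required by Theorem \ref{thm:klq4-8}; everything else is a routine assembly of the Schlichting completion machinery recorded in Remark \ref{rem:amen-pairs} and Lemma \ref{lem:bijective-Schlich}.
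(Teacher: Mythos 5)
Your proposal is correct and follows essentially the same route as the paper: reduce to the reduced case, pass to the Schlichting completion $\ghb$, use amenability of the closed subgroup $\overline{\Gamma}^G$ of the amenable group $\gb$, and identify $\overline{\Gamma}^G$ modulo the normal core of $\hb$ with the Schlichting completion of the reduced pair associated to $(\Gamma,H)$ via Theorem~\ref{thm:klq4-8}, concluding by Remark~\ref{rem:amen-pairs}(iv). The kernel identity you flag as the main obstacle is exactly the content of the paper's verification that its map $\ff$ is well defined with $\ff\inv(\widetilde{H}/N)=H/K$, and your sketch of it (continuity of conjugation plus $\Gamma\cap\hb=H$ from Lemma~\ref{lem:bijective-Schlich}) is sound.
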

\begin{proof}
Regarding Remark \ref{rem:amen-pairs}(ii), without loss of generality, we can assume that the Hecke pair $\gh$ is reduced. Since we are going to deal with two Schlichting completions and we need to distinguish between them, we denote the Schlichting completion of $\gh$ by $(\widetilde{G},\widetilde{H})$ and closure of $\Gamma$ in $\widetilde{G}$ by $\widetilde{\Gamma}$. We also set
\begin{eqnarray*}
K&:=&K_{(\Gamma, H)}= \bigcap_{x\in \Gamma} xHx\inv\\
N&:=&K_{(\widetilde{\Gamma}, \widetilde{H})}= \bigcap_{x\in \widetilde{\Gamma}} x\widetilde{H} x\inv= \bigcap_{x\in \Gamma} x\widetilde{H} x\inv.
\end{eqnarray*}
By Remark \ref{rem:amen-pairs}(iii), $\widetilde{G}$ is amenable. It implies that $\widetilde{\Gamma}$ is amenable too. Since $\widetilde{H}$ is compact and open, the discrete Hecke pair $(\widetilde{\Gamma},\widetilde{H})$ is amenable. By Remark \ref{rem:amen-pairs}(i), the reduced discrete Hecke pair $(\frac{\widetilde{\Gamma}}{N},\frac{\widetilde{H}}{N})$ is amenable. If we show that this latter Hecke pair is isomorphic to the Schlichting completion $\left(\overline{(\frac{\Gamma}{K})},\overline{(\frac{H}{K})}\right)$ of $(\frac{\Gamma}{K},\frac{H}{K})$, then it follows from Remark \ref{rem:amen-pairs}(iv) that the Hecke pair $(\Gamma, H)$ is amenable and our proof is complete.

To prove the above isomorphism, we define a map $\ff: \frac{\Gamma}{K} \to \frac{\widetilde{\Gamma}}{N}$ by $\ff(xK):=xN$ for all $x\in \Gamma$. Using Lemma \ref{lem:bijective-Schlich}, it is straightforward to check that $\ff$ is well defined, its image is dense in $\frac{\widetilde{\Gamma}}{N}$, and $\frac{H}{K}=\ff\inv (\frac{\widetilde{H}}{N})$. Therefore, by Theorem \ref{thm:klq4-8}, $\ff$ extends to a topological group isomorphism $\overline{\ff}:\overline{(\frac{\Gamma}{K})}\to  \frac{\widetilde{\Gamma}}{N}$ such that $\overline{\ff}\left(\overline{(\frac{H}{K})}\right)=\frac{\widetilde{H}}{N}$.
\end{proof}
We conclude this paper with discussing how amenability of pairs of groups behaves with respect to commensurability of subgroups. Two subgroups $H$ and $K$ of a group $G$ are called commensurable if $H\cap K$ is a finite index subgroup of both $H$ and $K$. In \cite{s2}, we showed that property (RD) of a Hecke pair $\gh$ is preserved if we replace $H$ by another Hecke subgroup $K$ provided that $H$ and $K$ are commensurable. In the following remark we answer a similar question concerning amenable pairs, but we do not have to restrict ourselves to Hecke pairs.

\begin{remark}
\label{rem:amen-commen}
Let $H$ be a closed cocompact subgroup of a locally compact group $G$. Then the pair $\gh$ is amenable if and only if $\D_G|_H=\D_H$, see Page 17 of \cite{eymard}. In particular if $H$ is a finite index subgroup of $G$, then the pair $\gh$ is amenable.

The relation of commensurability of subgroups preserves the amenability of corresponding pairs. More precisely, let $H$ and $K$ be  two closed commensurable subgroups of a locally compact group $G$. Then the pair $\gh$ is amenable if and only the pair $(G,K)$ is amenable. To prove this, without loss of generality, we can assume that $K$ is a finite index subgroup of $H$. Then using (FP), one easily observes that the amenability of $\gh$ follows from the amenability of $(G,K)$. The converse implication follows from the above discussion. 

With the same proof, one can generalize this statement to the case that $H$ and $K$ are two closed subgroups of $G$ such that $H\cap K$ is a cocompact subgroup of both groups $H$ and $K$, and we have $\D_H|_{H\cap K}=\D_{H\cap K}=\D_K|_{H\cap K}$.
\end{remark}

\bibliographystyle{amsalpha}

\end{document}